\documentclass{amsart}
\usepackage[T1]{fontenc}
\usepackage[utf8]{inputenc} 
\usepackage[english]{babel}

\usepackage{amsmath, amssymb, amsthm, mathtools}

\usepackage{graphicx}
\usepackage{xcolor}
\usepackage{hyperref}

\hypersetup{
    colorlinks=true,
    linkcolor=blue,
    citecolor=blue,
    urlcolor=blue
}

\newtheorem{theorem}{Theorem}[section]
\newtheorem{proposition}[theorem]{Proposition}
\newtheorem{lemma}[theorem]{Lemma}
\newtheorem{corollary}[theorem]{Corollary}

\theoremstyle{definition}
\newtheorem{definition}[theorem]{Definition}

\newtheorem{remark}[theorem]{Remark}

\numberwithin{equation}{section}

\newcommand{\R}{\mathbb{R}}
\newcommand{\C}{\mathbb{C}}
\newcommand{\Q}{\mathbb{Q}}
\newcommand{\Z}{\mathbb{Z}}

\begin{document}

\title[Four squares from three real polynomials]{Four squares from three real polynomials}
\author[A. Jurasi\'c]{Ana Jurasi\'c}

\address[Ana Jurasi\'c]{Faculty of Mathematics\\University of Rijeka\\Radmile Matej\v{c}i\'c 2\\51000 Rijeka\\Croatia}
\email[A. Jurasi\'c]{ajurasic@math.uniri.hr}

\subjclass[2020]{11D09,11C08}
\keywords{Diophantine $m$-tuples, polynomial Diophantine tuples, squares in polynomial rings}

\begin{abstract}
We construct three nonconstant polynomials $a,b,c\in \R[X]$ such that
\[
   ab+1,\qquad ac+1,\qquad bc+1,\qquad abc+1
\]
are all squares in $\R[X]$. The entries in the construction are quadratic, so the construction has the smallest possible positive degrees. By composition with nonconstant polynomials, this gives infinitely many examples over $\R[X]$ with all entries nonconstant.
\end{abstract}

\maketitle

\section{Introduction}\label{sec:1}

A Diophantine $m$-tuple is classically a set of $m$ distinct positive integers for which the product of any two distinct elements, increased by $1$, is a square. Polynomial analogues and $D(n)$-variants have a substantial literature. Early polynomial versions were studied by Jones \cite{Jones1976}. Foundational work on polynomial variants includes \cite{DujellaFuchs2004,DFT,DFW,DJsize,DJquad,Jquad}, and a recent general reference is \cite{Dbook}. The present note concerns a related, but stronger, condition for triples: besides requiring the three pairwise products plus $1$ to be squares, we also require the product of all three entries plus $1$ to be a square.

\begin{definition}
A set $\{a,b,c\}\subset \R[X]$ of three nonzero polynomials is called a \emph{four-square polynomial triple} if there exist $r,s,t,u\in \R[X]$, such that
\begin{equation}\label{eq:four-square}
   ab+1=r^2,\qquad ac+1=s^2,\qquad bc+1=t^2,\qquad abc+1=u^2.
\end{equation}
\end{definition}

The corresponding problem for positive integers asks for $a,b,c>1$ satisfying the four square conditions in \eqref{eq:four-square}. Dujella and Szalay \cite{DS} proved that there are infinitely many such triples of positive integers. Their construction lies inside regular Diophantine triples.

If an entry equal to $1$ is allowed, the problem degenerates. For example, $\{1,\ k^2-1,\ (k+1)^2-1\}$ satisfies the four required square conditions in the integer setting, and the same identity works over $\Z[X]$. Therefore, as we will explain in the Section \ref{sec:2}, the natural polynomial question is whether all three entries can be distinct nonconstant polynomials. We answer this affirmatively over $\R[X]$, and in fact with quadratic entries.

\section{Elementary restrictions}\label{sec:2}

As in the classical case, it is natural to assume that at least one element of a four-square polynomial triple is nonconstant. Also, analogously as in classical case, we can't have two equal polynomials in such a set. In this section we record the other three simple restrictions. The first explains why constant entries, apart from $0$ and $1$, are not possible in a four-square polynomial triple.

\begin{lemma}\label{lem:constant-entry}
Let $a,b,c\in \R[X]$ satisfy \eqref{eq:four-square}. Suppose that one entry, say $a$, is a constant in $\R\setminus\{0,1\}$. Then, either both $b$ and $c$ are constant, or exactly one of $b$ and $c$ is equal to zero and the other one is a nonzero polynomial.
\end{lemma}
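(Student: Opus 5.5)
The plan is to use only the two conditions that involve the product $bc$, namely $bc+1=t^2$ and $abc+1=u^2$. Put $\alpha:=a\in\R\setminus\{0,1\}$. Eliminating $bc$ between these two equations gives
\[
u^2-\alpha t^2=\alpha(bc+1)-\alpha bc+1-\alpha\cdot 1 + (\text{rearranged}) ,\qquad\text{that is}\qquad u^2-\alpha t^2=1-\alpha .
\]
(Directly: $u^2-\alpha t^2=(\alpha bc+1)-\alpha(bc+1)=1-\alpha$.) Since $\alpha\neq 1$, the right-hand side is a nonzero real constant. The first goal is to show that this forces $t$ and $u$ to be constant, and hence $bc=t^2-1$ to be constant.

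I would split into two cases according to the sign of $\alpha$.

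\emph{Case $\alpha>0$.} Factor over $\R[X]$ as $(u-\sqrt{\alpha}\,t)(u+\sqrt{\alpha}\,t)=1-\alpha$. The product is a nonzero constant, so both factors are units of $\R[X]$, i.e.\ nonzero constants. Adding and subtracting the two factors shows that $u$ and $t$ are constant.

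\emph{Case $\alpha<0$.} Here $u^2+|\alpha|\,t^2=1-\alpha$. Suppose $\max(\deg u,\deg t)=d\ge 1$. The coefficient of $X^{2d}$ on the left is a sum of a square of a real leading coefficient and $|\alpha|$ times another such square, with at least one term nonzero. This coefficient is therefore positive, so the left side has degree $2d>0$, which is a contradiction. Hence $u$ and $t$ are constant in this case as well.

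In both cases $bc=t^2-1$ is a real constant. I would then conclude using the fact that $\R[X]$ is an integral domain whose units are the nonzero constants. If $bc$ is a nonzero constant, then $b$ and $c$ are both units, hence both constant. If $bc=0$, then $b=0$ or $c=0$. If both vanish, then both are constant, which is the first alternative. Otherwise exactly one of them is zero and the other is a nonzero polynomial, which is the second alternative.

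The only genuinely delicate point is the degree argument ruling out nonconstant solutions of $u^2-\alpha t^2=1-\alpha$. It splits according to the sign of $\alpha$, because $\sqrt{\alpha}$ is available in $\R$ only when $\alpha>0$. I expect no further obstacles, since the conditions $ab+1=r^2$ and $ac+1=s^2$ are not needed at all.
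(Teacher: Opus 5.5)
Your proof is correct and follows the paper's argument: you eliminate $bc$ to get $u^2-at^2=1-a$, factor $u^2-at^2$, use that units are constants to make $u,t$ (hence $bc=t^2-1$) constant, and then finish the $bc$ bookkeeping. The only difference is that the paper factors in $\C[X]$, where $\sqrt{a}$ may be imaginary, so it handles $a<0$ with no case split; your positivity argument for $\alpha<0$ is valid but not needed. Separately, delete your first displayed chain, which is garbled and actually evaluates to $1$; the parenthetical direct computation is the correct one.
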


\begin{proof}
From $bc+1=t^2$ and $abc+1=u^2$, we obtain
\[
   u^2-a t^2=1-a.
\]
Since $a\ne 0,1$,
\[
   (u-t\sqrt{a})(u+t\sqrt{a})=1-a
\]
is a factorization of a nonzero constant in $\C[X]$. Thus both factors are units in $\C[X]$, hence constants. Therefore $u$ and $t$ are constant, so $bc=t^2-1$ is constant. This is possible only if $b$ and $c$ are both constant, or exactly one of $b$ and $c$ is equal to zero and the other one is a nonzero polynomial.
\end{proof}

The exceptional constants $0$ and $1$ do give degenerate families. For any $k\in \R[X]$,
\[
   \{0,\ k^2-1,\ (k+1)^2-1\}\ \ \textup{and}\ \ \{1,\ k^2-1,\ (k+1)^2-1\}
\]
satisfy \eqref{eq:four-square}, since
\[
   (k^2-1)((k+1)^2-1)+1=(k^2+k-1)^2.
\] By Lemma \ref{lem:constant-entry}, any entry from $\R\setminus\{0,1\}$ yields either a set $\{a,b,c\}$ containing $0$ or a set consisting entirely of constants.

\begin{lemma}\label{lem:degree-leading}
Let $\{a,b,c\}\subset \R[X]$ be a four-square polynomial triple whose three entries are nonconstant. Then $\deg a$, $\deg b$, and $\deg c$ are all even. Moreover, the leading coefficients of $a,b,c$ are positive real numbers.
\end{lemma}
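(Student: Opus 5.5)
Our approach is to compare leading terms in the four identities of \eqref{eq:four-square}. Write $\alpha,\beta,\gamma$ for the leading coefficients of $a,b,c$ and $d_a,d_b,d_c\ge 1$ for their degrees. Since all three entries are nonconstant, each of the products $ab$, $ac$, $bc$, $abc$ has positive degree. So adding $1$ does not change the leading term. Hence $r^2$ has leading term $\alpha\beta X^{d_a+d_b}$, and similarly for $s^2$, $t^2$ and $u^2$.

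A square $p^2$ of a nonzero real polynomial has even degree $2\deg p$ and positive leading coefficient (the square of a nonzero real number). From the four identities we therefore get that the sums
\[
d_a+d_b,\quad d_a+d_c,\quad d_b+d_c,\quad d_a+d_b+d_c
\]
are all even, and that the products
\[
\alpha\beta,\quad \alpha\gamma,\quad \beta\gamma,\quad \alpha\beta\gamma
\]
are all positive.

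\emph{Parity of the degrees.} Subtract $d_b+d_c$ from $d_a+d_b+d_c$. This shows $d_a$ is even, and by symmetry (for example, subtracting $d_a+d_c$ shows $d_b$ is even) all three degrees are even.

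\emph{Signs of the leading coefficients.} Since $\alpha\beta\gamma>0$ and $\beta\gamma>0$, we get $\alpha = \alpha\beta\gamma/(\beta\gamma)>0$. The same argument gives $\beta>0$ and $\gamma>0$.

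The only point needing care is the first step: we must know that $+1$ does not affect the leading term. This is exactly where the hypothesis that all three entries are nonconstant is used, because it makes every product have positive degree.
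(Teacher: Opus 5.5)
Your proof is correct and follows the paper's route: compare leading terms in the four identities to see that the pairwise and triple degree sums are even and the corresponding products of leading coefficients are positive, then deduce the parity and sign conclusions. Your subtraction $(d_a+d_b+d_c)-(d_b+d_c)$ is a slightly more direct version of the paper's ``common parity'' argument, and you state explicitly why nonconstancy ensures the $+1$ does not affect leading terms, which the paper leaves implicit.
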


\begin{proof}
Since $ab+1$, $ac+1$, and $bc+1$ are nonconstant squares,
\[
   \deg a+\deg b\equiv \deg a+\deg c\equiv \deg b+\deg c\equiv 0 \pmod 2.
\]
Thus, all three degrees have the same parity. Since $abc+1$ is also a nonconstant square, $\deg a+\deg b+\deg c$ is even. The common parity is therefore even.

Let $A,B,C$ be the leading coefficients of $a,b,c$, respectively. The leading coefficients of $ab+1$, $ac+1$, $bc+1$, and $abc+1$ must be positive. Hence, $AB>0$, $AC>0$, $BC>0$, and $ABC>0$. It follows that $A,B,C$ are all positive.
\end{proof}

We also note why the identity used in the integer construction does not immediately carry over to polynomials. Dujella and Szalay \cite{DS} use integer solutions of the equation
\begin{equation}\label{eq:pell}
   x^2-4xy+y^2=1.
\end{equation}
The direct polynomial analogue of this equation has no nonconstant solutions.

\begin{lemma}\label{lem:pell-constant}
If $x,y\in \R[X]$ satisfy \eqref{eq:pell}, then $x$ and $y$ are constant.
\end{lemma}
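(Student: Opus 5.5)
We factor the quadratic form over $\R$ and argue exactly as in the proof of Lemma \ref{lem:constant-entry}. The discriminant of $x^2-4xy+y^2$ is $16-4=12>0$, so the form splits over $\R$. With $\alpha=2+\sqrt3$ and $\beta=2-\sqrt3$ we have $\alpha+\beta=4$ and $\alpha\beta=1$, and therefore
\[
   x^2-4xy+y^2=(x-\alpha y)(x-\beta y).
\]
If $x,y\in\R[X]$ satisfy \eqref{eq:pell}, then both factors $x-\alpha y$ and $x-\beta y$ lie in $\R[X]$ and their product is the nonzero constant $1$.

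Units of $\R[X]$ are exactly the nonzero constants, so $x-\alpha y=\lambda$ and $x-\beta y=\lambda^{-1}$ for some $\lambda\in\R\setminus\{0\}$. Since $\alpha\neq\beta$, this is a linear system with nonzero determinant, and it can be solved for $x$ and $y$:
\[
   y=\frac{\lambda^{-1}-\lambda}{\alpha-\beta},\qquad x=\lambda+\alpha y .
\]
Both are real constants, which proves the lemma.

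An alternative is a degree and leading-coefficient argument. If $d=\max(\deg x,\deg y)>0$, the coefficient of $X^{2d}$ must vanish. This forces the ratio of the leading coefficients to be a root of $t^2-4t+1$, and one would then have to descend through lower-order terms. The factorization avoids that descent, so it is the preferred route.

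There is no real obstacle. The only point to check is that the roots $2\pm\sqrt3$ are real, so the factorization takes place in $\R[X]$ itself. Even without that, one could pass to $\C[X]$ as in Lemma \ref{lem:constant-entry}, where units are again constants. The key structural fact is $\alpha\neq\beta$, which is what makes the linear system solvable and forces $x$ and $y$ to be constant.
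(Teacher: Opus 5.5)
Your proof is correct and is essentially the paper's own argument: factor $x^2-4xy+y^2=(x-(2+\sqrt3)y)(x-(2-\sqrt3)y)=1$, note that both factors are units of $\R[X]$ and hence constants, and solve the resulting linear system, which is nonsingular since $2+\sqrt3\neq 2-\sqrt3$. Your explicit formulas for $x$ and $y$ simply spell out the final step that the paper leaves to the reader.
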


\begin{proof}
From \eqref{eq:pell}, we have
\[
   \bigl(x-(2+\sqrt3)y\bigr)\bigl(x-(2-\sqrt3)y\bigr)=1.
\]
Both factors are units in $\R[X]$, so they are constant. Solving the resulting two linear equations gives that $x$ and $y$ are constant.
\end{proof}

\section{A quadratic construction}\label{sec:3}

The construction begins with the following elementary observation:

\begin{lemma}\label{lem:basic-square}
Let $u,v, D\in \R$ with $u\ne v$, and let $D>0$. Then
\[
   1+D(Y-u)(Y-v)
\]
is a square of a linear polynomial in $Y$ if and only if
\[
   D=\frac{4}{(v-u)^2}.
\]
For this value of $D$,
\begin{equation}\label{eq:basic-square}
   1+\frac{4(Y-u)(Y-v)}{(v-u)^2}
   =\left(\frac{2Y-u-v}{v-u}\right)^2.
\end{equation}
\end{lemma}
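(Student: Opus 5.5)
The proof is a direct discriminant computation. Expanding gives
\[
   1+D(Y-u)(Y-v)=DY^2-D(u+v)Y+(1+Duv),
\]
which is a quadratic in $Y$ with leading coefficient $D>0$. A real quadratic $\alpha Y^2+\beta Y+\gamma$ with $\alpha>0$ is the square of a linear polynomial over $\R$ exactly when its discriminant $\beta^2-4\alpha\gamma$ vanishes. In that case it equals $\alpha\bigl(Y+\tfrac{\beta}{2\alpha}\bigr)^2=\bigl(\sqrt{\alpha}\,Y+\tfrac{\beta}{2\sqrt{\alpha}}\bigr)^2$. Conversely, if $(pY+q)^2$ equals the quadratic, then comparing coefficients gives $\beta^2-4\alpha\gamma=4p^2q^2-4p^2q^2=0$. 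So the plan is to reduce the statement to the vanishing of one discriminant.

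The discriminant here is
\[
   D^2(u+v)^2-4D(1+Duv)=D^2\bigl((u+v)^2-4uv\bigr)-4D=D\bigl(D(v-u)^2-4\bigr).
\]
Since $D>0$, it vanishes if and only if $D(v-u)^2=4$. Because $u\ne v$, this is equivalent to $D=4/(v-u)^2$, which proves the equivalence.

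For the explicit identity \eqref{eq:basic-square}, I would verify it directly. The right-hand side is
\[
   \frac{(2Y-u-v)^2}{(v-u)^2}.
\]
Write $2Y-u-v=(Y-u)+(Y-v)$ and use the identity $\bigl((Y-u)+(Y-v)\bigr)^2-\bigl((Y-u)-(Y-v)\bigr)^2=4(Y-u)(Y-v)$. Since $(Y-u)-(Y-v)=v-u$, this gives
\[
   (2Y-u-v)^2=(v-u)^2+4(Y-u)(Y-v).
\]
Dividing by $(v-u)^2$ yields \eqref{eq:basic-square}.

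No step is a real obstacle. The only points needing care are the use of $D>0$ and $u\ne v$ to divide, and the fact that $D>0$ is what allows the vanishing-discriminant quadratic to be written as a square over $\R$.
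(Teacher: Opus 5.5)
Your proposal is correct and follows the paper's proof: you compute the discriminant $D\bigl(D(v-u)^2-4\bigr)$, use $D>0$ to reduce to $D=4/(v-u)^2$, and check the identity directly. You also spell out two points the paper leaves implicit: why a positive-leading-coefficient quadratic is a square of a linear real polynomial exactly when its discriminant vanishes, and an explicit verification of \eqref{eq:basic-square}.
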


\begin{proof}
The discriminant of polynomial
\[
   D(Y-u)(Y-v)+1=DY^2-D(u+v)Y+Duv+1
\]
is
\[
   D^2(u+v)^2-4D(Duv+1)=D\bigl(D(u-v)^2-4\bigr).
\]
Since $D>0$, this discriminant vanishes exactly when $D=4/(u-v)^2$. The identity \eqref{eq:basic-square} follows by substitution.
\end{proof}

We now give the construction. Let \(Y=X^2\) and look for polynomials from $\R[X]$ of the form
\begin{equation}\label{eq:abc-def}
   a=A(X^2-\alpha),\qquad
   b=B(X^2-\beta),\qquad
   c=C(X^2-\gamma),
\end{equation}where $A,B,C\neq 0$ and $\alpha,\beta,\gamma$ are distinct. The idea is to make such a construction for which the first three square conditions in \eqref{eq:four-square} are automaticaly fulfiled and the final
condition collapses to a square. Hence, we impose
\begin{equation}\label{eq:lambda-pair-products}
   AB=\frac{4}{(\beta-\alpha)^2},\qquad
   AC=\frac{4}{(\gamma-\alpha)^2},\qquad
   BC=\frac{4}{(\gamma-\beta)^2}.
\end{equation}A convenient compatible choice is
\begin{equation}\label{eq:lambda-defs}
   A=\frac{2(\gamma-\beta)}{(\beta-\alpha)(\gamma-\alpha)},\qquad
   B=\frac{2(\gamma-\alpha)}{(\beta-\alpha)(\gamma-\beta)},\qquad
   C=\frac{2(\beta-\alpha)}{(\gamma-\alpha)(\gamma-\beta)}.
\end{equation}Set
\begin{equation}\label{eq:delta}
   \Delta=(\beta-\alpha)(\gamma-\alpha)(\gamma-\beta).
\end{equation}Then, equations in \eqref{eq:lambda-defs} also give
\begin{equation}\label{eq:lambda-product}
   ABC=\frac{8}{\Delta}.
\end{equation}The only remaining condition
is the fourth square condition \(abc+1\in \R[X]^2\). 

By \eqref{eq:lambda-product}, 
\begin{equation}\label{eq:8} abc+1=\frac{8}{\Delta}(Y-\alpha)(Y-\beta)(Y-\gamma)+1. 
\end{equation}We now choose \(\alpha,\beta,\gamma\) so that
\begin{equation}\label{eq:cubic-target}
  (Y-\alpha)(Y-\beta)(Y-\gamma)=Y(Y-1)^2-d,
\end{equation}
for some nonzero constant \(d\). Indeed, after a substitution \(Y=X^2\),
\[
  Y(Y-1)^2=X^2(X^2-1)^2=\bigl(X(X^2-1)\bigr)^2
\]
is already a square in \(\R[X]\). Substituting \eqref{eq:cubic-target} into \eqref{eq:8}, and taking \begin{equation}\label{eq:Delta-equals-8d}
  \Delta=8d,
\end{equation} gives
\begin{equation}\label{eq:fourth}
  abc+1=\frac{8}{\Delta}\bigl(Y(Y-1)^2-d\bigr)+1=\frac{8}{\Delta}Y(Y-1)^2.
\end{equation} The problem is now reduced to finding a cubic polynomial
\begin{equation}\label{eq:f}
  f(Y)=Y^3-2Y^2+Y-d,
\end{equation}
whose roots \(\alpha,\beta,\gamma\) satisfy \eqref{eq:Delta-equals-8d}. The discriminant of the polynomial \eqref{eq:f} is
\begin{equation}\label{eq:disc-f}
  \operatorname{disc}(f)=d(4-27d).
\end{equation}On the other hand, since $f$ is monic, $\operatorname{disc}(f)=\Delta^2$. Hence, by \eqref{eq:Delta-equals-8d} and \eqref{eq:disc-f}, we get $d(4-27d)=64d^2.$ Since \(d=0\) gives repeated roots of $f$ and then zero
denominators in \eqref{eq:lambda-defs}, division by \(d\ne 0\) gives
\begin{equation}\label{eq:d-value}
  d=\frac{4}{91}.
\end{equation} In \eqref{eq:f}, now we have
\begin{equation}\label{eq:cubic}
   f(Y)=Y^3-2Y^2+Y-\frac{4}{91}.
\end{equation}
The discriminant of $f$ is
\[
   \operatorname{disc}(f)=\frac{1024}{8281}=\left(\frac{32}{91}\right)^2>0,
\]
so $f$ has three distinct real roots $\alpha$, $\beta$, $\gamma$. If we set
\[
   \alpha<\beta<\gamma,
\] then $\Delta>0$ and we have
\begin{equation}\label{eq:delta-value}
   \Delta=\frac{32}{91}.
\end{equation}

\begin{theorem}\label{thm:main}
The polynomials $a,b,c$ defined by \eqref{eq:abc-def} form a four-square polynomial triple in $\R[X]$. More precisely,
\begin{align}
   ab+1&=\left(\frac{2X^2-\alpha-\beta}{\beta-\alpha}\right)^2, \label{eq:ab-square}\\
   ac+1&=\left(\frac{2X^2-\alpha-\gamma}{\gamma-\alpha}\right)^2, \label{eq:ac-square}\\
   bc+1&=\left(\frac{2X^2-\beta-\gamma}{\gamma-\beta}\right)^2, \label{eq:bc-square}\\
   abc+1&=\left(\frac{\sqrt{91}}{2}X(X^2-1)\right)^2. \label{eq:abc-square}
\end{align}
In particular, $a,b,c$ are different nonconstant polynomials. Their degrees are minimal among all four-square polynomial triples over $\R[X]$ with nonconstant entries.
\end{theorem}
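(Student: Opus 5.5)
The proof will be a direct verification based on the construction of Section~\ref{sec:3}, followed by a short degree argument using Lemma~\ref{lem:degree-leading}.

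\emph{The three pairwise squares.} First check that the choice \eqref{eq:lambda-defs} satisfies \eqref{eq:lambda-pair-products}. For instance,
\[
AB=\frac{4(\gamma-\beta)(\gamma-\alpha)}{(\beta-\alpha)^2(\gamma-\alpha)(\gamma-\beta)}=\frac{4}{(\beta-\alpha)^2},
\]
and the other two products follow the same way. Then $ab+1=1+AB(Y-\alpha)(Y-\beta)$ with $Y=X^2$, and Lemma~\ref{lem:basic-square}, in the form of identity \eqref{eq:basic-square} with $u=\alpha$, $v=\beta$, gives \eqref{eq:ab-square}. Identities \eqref{eq:ac-square} and \eqref{eq:bc-square} follow in the same way. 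Since $\alpha,\beta,\gamma$ are distinct real roots of \eqref{eq:cubic}, all denominators are nonzero and the squares lie in $\R[X]$.

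\emph{The fourth square.} From \eqref{eq:lambda-defs} we get $ABC=8/\Delta$, which is \eqref{eq:lambda-product}. With $\alpha<\beta<\gamma$ we have $\Delta>0$. Because $f$ is monic, $\Delta^2=\operatorname{disc}(f)=(32/91)^2$, so $\Delta=32/91=8\cdot\frac{4}{91}=8d$, which is \eqref{eq:Delta-equals-8d}. Therefore \eqref{eq:fourth} applies and gives
\[
abc+1=\frac{8}{\Delta}\,X^2(X^2-1)^2=\frac{91}{4}X^2(X^2-1)^2.
\]
This is \eqref{eq:abc-square}. Two routine computations remain here: checking the discriminant $d(4-27d)$ at $d=4/91$, namely $\frac{4}{91}\cdot\frac{256}{91}=\frac{1024}{8281}$, and checking the coefficient $8/(32/91)=91/4$.

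\emph{Distinctness and minimality.} Each of $a,b,c$ is a nonzero multiple of $X^2-\text{const}$, so each has degree $2$. They are pairwise different because their roots in $Y$, namely $\alpha,\beta,\gamma$, are distinct. For minimality, Lemma~\ref{lem:degree-leading} shows that in any four-square triple with all entries nonconstant, every degree is even and hence at least $2$. So degree $2$ for every entry is the smallest possible.

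The main obstacle is only bookkeeping. One has to confirm that $\Delta$ equals $+32/91$ and not $-32/91$, since the ordering $\alpha<\beta<\gamma$ fixes the sign. One also has to confirm that all three roots are real. This follows from $\operatorname{disc}(f)>0$, so that $a,b,c$ indeed lie in $\R[X]$.
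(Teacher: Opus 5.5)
Your proposal is correct and follows the paper's proof essentially step by step. You verify \eqref{eq:lambda-pair-products} and apply \eqref{eq:basic-square} for the pairwise squares, use $\Delta=32/91=8d$ in \eqref{eq:fourth} for the fourth square, and invoke Lemma~\ref{lem:degree-leading} for minimality; the only cosmetic difference is that you argue distinctness via the distinct roots $\alpha,\beta,\gamma$ in $Y$ rather than by comparing coefficients, which amounts to the same thing.
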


\begin{proof}
The numbers $A,B,C$ are positive because $\alpha<\beta<\gamma$, and therefore $a,b,c$ are nonconstant polynomials in $\R[X]$. They are pairwise distinct: if, for example, $A(X^2-\alpha)=B(X^2-\beta)$, then comparison of coefficients gives $A=B$ and $\alpha=\beta$, a contradiction. The identities \eqref{eq:ab-square}, \eqref{eq:ac-square}, and \eqref{eq:bc-square} follow from \eqref{eq:basic-square} and \eqref{eq:lambda-pair-products}, with $Y=X^2$. The equation \eqref{eq:abc-square} follows from \eqref{eq:fourth} and \eqref{eq:delta-value}. The last assertion follows from Lemma \ref{lem:degree-leading}: in any example with all entries nonconstant, all three degrees are positive even integers, hence at least $2$.
\end{proof}

\begin{remark}\label{rem:numerical}
For reference, the roots of the polynomial \eqref{eq:cubic} are 
\[
   \alpha\approx 0.0485571496,\qquad
   \beta\approx 0.7594142315,\qquad
   \gamma\approx 1.1920286189.
\]
The corresponding leading coefficients of the polynomials $a$, $b$ and $c$ are
\[
   A\approx 1.0644452464,\qquad
   B\approx 7.4365598181,\qquad
   C\approx 2.8739949355.
\]
The exact form of the triple $\{a,b,c\}$ is the root-based one in \eqref{eq:abc-def}.
\end{remark}

\section{Infinitely many examples}\label{sec:4}

Now, it is easy to generate infinitely many four-square polynomial triples.

\begin{proposition}\label{prop:composition}
Let $\{a,b,c\}\subset \R[X]$ be a four-square polynomial triple whose three entries are nonconstant. If $h\in \R[X]$ is nonconstant, then
\[
   \{a\circ h,\ b\circ h,\ c\circ h\}
\]
is again a four-square polynomial triple with three nonconstant entries.
\end{proposition}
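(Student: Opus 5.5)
The plan is to use the fact that composition with a fixed polynomial $h$ is a ring homomorphism $\R[X]\to\R[X]$, $p\mapsto p\circ h$. Start from $r,s,t,u\in\R[X]$ with $ab+1=r^2$, $ac+1=s^2$, $bc+1=t^2$, $abc+1=u^2$, as in \eqref{eq:four-square}. Apply this homomorphism to each identity. Since it preserves sums, products and the constant $1$, we get
\[
   (a\circ h)(b\circ h)+1=(r\circ h)^2,\qquad (a\circ h)(c\circ h)+1=(s\circ h)^2,
\]
\[
   (b\circ h)(c\circ h)+1=(t\circ h)^2,\qquad (a\circ h)(b\circ h)(c\circ h)+1=(u\circ h)^2.
\]
So the four square conditions hold with witnesses $r\circ h$, $s\circ h$, $t\circ h$, $u\circ h\in\R[X]$.

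It remains to check the side conditions. These are that the entries are nonzero and nonconstant, and (implicitly, since we speak of a three-element set) that they are pairwise distinct. For degrees, I will use $\deg(p\circ h)=\deg p\cdot\deg h$ for nonconstant $p$. This holds over $\R$ because the leading coefficient of $p\circ h$ is $P H^{\deg p}\neq 0$, where $P$ and $H$ are the leading coefficients of $p$ and $h$. Since $\deg a,\deg b,\deg c\ge 1$ and $\deg h\ge1$, each composed entry has positive degree, so it is nonconstant and in particular nonzero.

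The only step needing a small argument is distinctness: $a\ne b$ should imply $a\circ h\ne b\circ h$. I plan to note that $(a-b)\circ h=a\circ h-b\circ h$, and that $a-b$ is a nonzero polynomial. If $a-b$ is a nonzero constant, then so is its composition with $h$. If it is nonconstant, the degree formula gives $\deg((a-b)\circ h)=\deg(a-b)\deg h\ge1$. In either case $(a-b)\circ h\neq0$, and the same argument applies to the other two pairs. Hence $\{a\circ h,b\circ h,c\circ h\}$ is again a four-square polynomial triple with three nonconstant entries.

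I do not expect a real obstacle. The whole proof is the homomorphism property plus the degree formula for composition.
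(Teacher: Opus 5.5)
Your proof is correct and matches the paper's: apply the substitution homomorphism $p\mapsto p\circ h$ to the four identities, and note that nonconstant entries stay nonconstant. You also keep the entries distinct by injectivity of substitution; you justify this explicitly via $\deg(p\circ h)=\deg p\cdot\deg h$, while the paper simply asserts it.
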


\begin{proof}
Substituting $X\mapsto h(X)$ in \eqref{eq:four-square} gives
\[
   a(h)b(h)+1=r(h)^2,\qquad
   a(h)c(h)+1=s(h)^2,
\]
\[
   b(h)c(h)+1=t(h)^2,\qquad
   a(h)b(h)c(h)+1=u(h)^2.
\]
A composition of two nonconstant polynomials is nonconstant, so the three entries remain nonconstant. The substitution homomorphism $p(X)\mapsto p(h(X))$ is injective, because $h$ is nonconstant. Hence, distinct entries remain distinct.
\end{proof}

\begin{corollary}\label{cor:infinite}
There are infinitely many four-square polynomial triples in $\R[X]$ with all entries nonconstant.
\end{corollary}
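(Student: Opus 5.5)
The plan is to combine Theorem~\ref{thm:main} with Proposition~\ref{prop:composition}, using an infinite family of substitutions $h$ and then checking that the resulting triples are pairwise distinct as sets.

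First, let $\{a,b,c\}$ be the triple from Theorem~\ref{thm:main}. By that theorem its three entries are distinct, nonconstant, and of degree $2$. For each integer $n\ge 1$ put $h_n(X)=X^n$, which is nonconstant. Proposition~\ref{prop:composition} then says that
\[
   T_n=\{a(X^n),\ b(X^n),\ c(X^n)\}
\]
is a four-square polynomial triple whose three entries are nonconstant (and distinct).

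It remains to show the triples $T_n$ are pairwise different. Every entry of $T_n$ has degree $2n$, since $\deg(p\circ h)=\deg p\cdot\deg h$. Hence for $n\ne m$ no entry of $T_n$ equals an entry of $T_m$. So $T_n\ne T_m$, and we obtain infinitely many distinct triples.

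There is no real obstacle here. The only point needing care is that ``infinitely many'' refers to distinct sets, and the degree argument settles this. Alternatively, one could use the translates $h(X)=X+k$ for $k\in\R$ and compare roots. The degree argument is cleaner, so I would use it.
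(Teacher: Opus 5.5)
Your proposal is correct and is essentially the paper's proof: apply Proposition~\ref{prop:composition} to the triple of Theorem~\ref{thm:main} with $h_n(X)=X^n$, and tell the resulting triples apart by the common degree $2n$ of their entries. Your version also states explicitly that the triples are pairwise distinct for all $n\ne m$, which the paper's wording leaves implicit.
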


\begin{proof}
Apply Proposition \ref{prop:composition} to the triple from Theorem \ref{thm:main}, with $h_m(X)=X^m$, $m=1,2,3,\ldots$. The degree of polynomials in the resulting triple is $2m$, so these triples are distinct for infinitely many $m$.
\end{proof}

\section{Final remarks}\label{sec:final-remarks}

We described a construction over the real polynomials. It does not by itself give a solution over $\Z[X]$ or $\Q[X]$, because the coefficients in \eqref{eq:abc-def} are defined from the real roots of the cubic polynomial \eqref{eq:cubic}. Moreover, no evident coefficient rescaling preserves equations of the form ``product plus $1$ is a square''. Determining whether an analogue with all entries nonconstant exists over $\Q[X]$ or $\Z[X]$ remains a separate arithmetic question.\\

\section{Acknowledgements}
This work was supported by the Croatian Science Foundation Grant No.~IP-2022-10-5008. A.~J. was also supported by the European Union -- NextGenerationEU, project number uniri-iz-25-62-ALGEBRA. During the exploratory phase of this research, the author utilized GPT-5.5 Pro to investigate examples and candidate constructions. All mathematical results were performed and validated by the author. The author takes full responsibility for the mathematical correctness of these results. The author would like to thank Professor Andrej Dujella for his support and suggestions.

\bigskip

\end{document}